\newcommand{\bC}{\mathbb{C}}
\newcommand{\bP}{\mathbb{P}}
\newcommand{\bZ}{\mathbb{Z}}
\newcommand{\calA}{\mathcal{A}}
\newcommand{\calF}{\mathcal{F}}
\newcommand{\calI}{\mathcal{I}}
\newcommand{\calL}{\mathcal{L}}
\newcommand{\calO}{\mathcal{O}}
\newcommand{\Gal}{\mathrm{Gal}}
\newcommand{\I}{\mathrm{I}}
\newcommand{\Sing}{\mathrm{Sing}}
\newcommand{\Supp}{\mathrm{Supp}}
\newcommand{\Pic}{\mathrm{Pic}}
\newtheorem{Th}{Theorem}[section]
\newtheorem{Prop}[Th]{Proposition}
\newtheorem{Lem}[Th]{Lemma}
\newtheorem{Cor}[Th]{Corollary}
\newtheorem{Rem}[Th]{Remark}
\newtheorem{Def}[Th]{Definition}
\newtheorem{Problem}[Th]{Problem}
\begin{document}

\title[splitting numbers for Galois covers]
{A note on splitting numbers for Galois covers and $\pi_1$-equivalent Zariski $k$-plets}
\author{Taketo Shirane}
\address{National Institute of Technology, Ube College, 
2-14-1 Tokiwadai, Ube 755-8555, Yamaguchi Japan}
\email{tshirane@ube-k.ac.jp}
\keywords{Zariski piar,$\pi_1$-equivalent Zariski $k$-plet, Galois cover, splitting curve}
\subjclass[2010]{14E20, 14F45, 14H30, 14H50, 57M12}
\begin{abstract}
In this paper, we introduce \textit{splitting numbers} of subvarieties in a smooth complex variety for a Galois cover, 
and prove that the splitting numbers are invariant under certain homeomorphisms. 
In particular cases, we show that splitting numbers enable us to distinguish topologies of complex plane curves even if fundamental groups of complements of plane curves are isomorphic. 
%
Consequently, we prove that there are $\pi_1$-equivalent Zariski $k$-plets for any integer $k\geq2$. 

\end{abstract}

\maketitle

\section*{Introduction}

In this paper, we investigate topologies of plane curves by technique of algebraic geometry, 
where a \textit{topology of a plane curve} means the analytic topology of a pair of the complex projective plane $\bP^2=\bP_{\bC}^2$ and a reduced divisor on $\bP^2$. 
The \textit{combinatorial data} of a plane curve consists of data such as the number of its irreducible components, the degree and the types of singularities of each irreducible component and the intersection data of the irreducible components (see \cite{artal} for details). 
It is known that the combinatorial data of a plane curve determine the topology of tubular neighborhood of the plane curve (cf. \cite{artal}). 
In 1929 \cite{zariski}, O. Zariski proved that the fundamental group of the complement of  a $6$-cuspidal plane sextic is the free product of two cyclic groups of order $2$ and $3$ if the $6$ cusps are on a conic, and the cyclic group of order $6$ otherwise. 
This result \cite{zariski} showed that the combinatorial data of a plane curve does not determine the topology of the plane curve. 
We call a $k$-plet $(C_1,\dots, C_k)$ of plane curves $C_i\subset\bP^2$ a \textit{Zariski $k$-plet} 
if $C_1,\dots,C_k$ have same combinatorial data, and there exist no homeomorphisms $h_{ij}:\bP^2\to\bP^2$ satisfying $h_{ij}(C_j)=C_i$ for any $i$ and $j$ with $i\ne j$. 
In the case of $k=2$, a Zariski $2$-plet is called a \textit{Zariski pair}. 

From 90's, many examples of Zariski pairs were constructed (for example, see \cite{artal1, artal, k-plet, bantoku, oka1, tokunaga}). 
The main tool to distinguish topology of plane curves is fundamental groups as \cite{zariski}. 
Indeed, some topological invariants (for example, Alexander polynomials, characteristic varieties, existence/non-existence of certain Galois covers branched along given curves) are used to distinguish difference of fundamental groups of the complements of plane curves. 
Some authors introduced other methods to distinguish topology of plane curves. 
For example, Artal--Carmona--Cogolludo \cite{braidmonodromy} proved that the braid monodromy of an affine plane curve determine the topology of a related projective plane curve. 
A.~Degtyar\"ev and I.~Shimada constructed Zariski pairs of simple sextic curves by the theory of $K3$-surfaces given by double covers branched at simple sextic curves (for example \cite{degtyarev, degtyarev2, shimada2}). 
Artal--Florens--Guerville \cite{guerville} introduced a new topological invariant, called $\calI$-invariant, for a line arrangement $\calA$ derived from the peripheral structure $\pi_1(B_{\calA})\to \pi_1(E_{\calA})$, 
where $B_{\calA}$ and $E_{\calA}$ are the boundary manifold and the exterior of $\calA$ respectively, 
and constructed Zariski pairs of line arrangements in \cite{guerville, guerville1}. 
Recently, Guerville--Meilhan \cite{guerville2} generalized $\calI$-invariant of line arrangements to an invariant of any algebraic curves, called \textit{linking set}. 
Artal--Tokunaga \cite{k-plet} and Shimada \cite{shimada2} have studied \textit{splitting curves} with respect to double covers to construct Zariski $k$-plet. 
After these works \cite{k-plet, shimada2}, S.~Bannai \cite{bannai} introduced \textit{splitting type} with respect to double covers, 
and he and the author constructed Zariski pairs and $3$-plets by using splitting type in \cite{bannai, banshira}. 
In particular, Degtyar\"ev \cite{degtyarev2} found a Zariski pair $(C_1,\, C_2)$ of simple sextic curves $C_1$ and $C_2$ such that the fundamental groups of the complements of $C_1$ and $C_2$ are isomorphic, such a Zariski pair called a \textit{$\pi_1$-equivalent Zariski pair}. 
Moreover, in \cite{artal}, a Zariski pair $(C_1, C_2)$ such that $\bP^2\setminus C_1$ and $\bP^2\setminus C_2$ are homeomorphic, called a \textit{complement-equivalent Zariski pair}, was constructed by using braid monodromy as \cite{braidmonodromy}. 
However, it seems that there are not many examples of $\pi_1$-equivalent Zariski pairs. 
On the other hand, Shimada \cite{shimada} constructed families of equisingular curves with many connected components, which can not be distinguished by fundamental groups. 
It is not known whether two equisingular curves in distinct connected components of a family of \cite{shimada} provide a Zariski pair, or not. 
In the present paper, we introduce a \textit{splitting number} of an irreducible subvariety in a smooth variety for a Galois cover, which is inspired by the splitting type of Bannai \cite{bannai}, 
and prove that a family of Shimada \cite{shimada} provides a Zariski $k$-plet.

To state the main theorem, we recall the families of Shimada \cite{shimada}. 
In \cite{shimada}, Shimada defined plane curves of type $(b,m)$ as follows. 

\begin{Def}[{\cite[Definition~1.1]{shimada}}]\rm
Let $b$ and $m$ be positive integers such that $b\geq 3$ and $b\equiv 0\pmod{m}$. 
Put $n:=b/m$. 
A projective plane curve $R\subset\bP^2$ is said to be \textit{of type $(b,m)$} if it satisfies the followings; 
\begin{enumerate}
\item 
$R$ consists of two irreducible components $B$ and $E$ of degree $b$ and $3$, respectively, 
\item 
both of $B$ and $E$ are non-singular, 
\item 
the set-theoretical intersection of $B$ and $E$ consists of $3n$ points, and 
\item 
at each intersection point, $B$ and $E$ intersect with multiplicity $m$. 
\end{enumerate} 
\end{Def}

He considered the family $\calF_{b,m}\subset\bP_{\ast}H^0(\bP^2,\calO(b+3))$ of all curves of type $(b,m)$. 
Here $\bP_{\ast}H^0(\bP^2,\calO(d))$ is the projective space of one-dimensional subspaces of the vector space $H^0(\bP^2,\calO(d))$, 
which parameterize all plane curves of degree $d$. 
He proved the following theorem. 

\begin{Th}[{\cite[Theorem~1.2]{shimada}}]\label{th. shimada}
Suppose that $b\geq 4$, and let $m$ be a divisor of $b$. 
\begin{enumerate}
\item 
The number of the connected components of $\calF_{b,m}$ is equal to the number of divisors $m$. 
\item 
Let $R$ be a member of $\calF_{b,m}$. 
Then the fundamental group $\pi_1(\bP^2\setminus R)$ is isomorphic to $\bZ$ if $b$ is not divisible by $3$, while it is isomorphic to $\bZ\oplus\bZ/3\bZ$ if $b$ is divisible by $3$. 
\end{enumerate}
\end{Th}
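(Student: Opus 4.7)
I would split the two assertions, since they need different tools: part (2) is an abelianisation calculation upgraded to abelianity, while part (1) rests on identifying a torsion class on $E$ as a fine invariant.

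For part (2), the first step is the standard computation
\begin{equation*}
H_1(\bP^2\setminus R;\bZ)\cong\bZ^r/\langle(d_1,\dots,d_r)\rangle
\end{equation*}
for a reduced plane curve with irreducible components of degrees $d_1,\dots,d_r$; applied to $R=B\cup E$ with $(d_1,d_2)=(b,3)$ it already gives $\bZ$ when $3\nmid b$ and $\bZ\oplus\bZ/3\bZ$ when $3\mid b$. It then suffices to prove that $\pi_1(\bP^2\setminus R)$ is abelian, which I would do via a Zariski--van Kampen analysis on a generic pencil of lines: at each of the $3n$ high-contact points the local braid monodromy is a power of a full twist of two smooth branches, and after taking into account the smoothness of $B$ and $E$ themselves one checks that the presentation collapses to the abelianisation.

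The harder part is (1), for which I would construct a torsion invariant. Put $L:=H|_E\in\Pic^3(E)$ and let $D_R:=p_1+\dots+p_{3n}$ be the reduced intersection of $B$ and $E$, viewed as a divisor on $E$. Condition (4) in the definition of type $(b,m)$ reads $mD_R=B|_E\sim bL=mn\,L$ in $\Pic(E)$, so
\begin{equation*}
\gd_R:=D_R-nL\in E[m]\subset\Pic^0(E)
\end{equation*}
is a well-defined $m$-torsion point of $E$. I would take the pair $(E,\gd_R)$, modulo the monodromy action of the universal family of smooth cubics, as the candidate component invariant. That monodromy is classically $\mathrm{SL}_2(\bZ/m)$ acting on $E[m]\cong(\bZ/m)^2$, and its orbits are parametrised by the order of the torsion element, equivalently by the divisors of $m$.

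To conclude I would verify two facts: that $R\mapsto\gd_R$ is locally constant on $\calF_{b,m}$ (immediate from continuity of the Abel--Jacobi map), and that each stratum $\{R:\mathrm{ord}(\gd_R)=d\}$ is connected. The latter, which I expect to be the main obstacle, demands a Brill--Noether / deformation argument: for each smooth cubic $E$ and each $\gd\in E[m]$, one needs that the family of smooth degree-$b$ curves $B$ with $D_R-nL=\gd$ is irreducible, which involves the vanishing of $H^1(\bP^2,\calI_{mD}(b))$ for the relevant divisors $D$ on $E$ and a codimension estimate on the locus of reducible or singular $B$. Combined with the surjectivity of the level-$m$ monodromy on smooth cubics, which is a classical fact from the theory of modular curves, this yields the stated bijection between the connected components of $\calF_{b,m}$ and the divisors of $m$.
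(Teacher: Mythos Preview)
This theorem is not proved in the present paper: it is quoted from Shimada~\cite{shimada} as an external input. The only partial treatment here is in Section~3, which recalls Shimada's invariant $\lambda(R)$ --- the order of $[\calO_E(D_R-nH)]$ in $\Pic^0(E)$, which is exactly your $\mathrm{ord}(\gd_R)$ --- records the decomposition $\calF_{b,m}=\coprod_{\mu\mid m}\calF_{b,m}(\mu)$, and then cites \cite[Proposition~2.1]{shimada} for the irreducibility of each stratum, from which part~(1) follows. Part~(2) is not addressed in this paper at all beyond the citation.

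Your outline is therefore not comparable to anything in this paper, but it does faithfully reconstruct Shimada's own argument. Two minor remarks. First, the detour through the $\mathrm{SL}_2(\bZ/m)$-monodromy on $E[m]$ is correct but not strictly needed: once each $\calF_{b,m}(\mu)$ is shown directly to be nonempty and irreducible, the transitivity of the monodromy on torsion points of a given order is subsumed rather than used as a separate input. Second, the irreducibility step you flag as the main obstacle --- the $H^1$-vanishing for $\calI_{mD}(b)$ on $\bP^2$ together with a codimension estimate ruling out singular or reducible $B$ --- is precisely the content of Shimada's Proposition~2.1, and your sketch of part~(2) via an abelianisation computation upgraded by a Zariski--van~Kampen argument is likewise Shimada's method.
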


Hence, if $R_1$ and $R_2$ are curves of type $(b,m)$ in distinct connected components of $\calF_{b,m}$, the embeddings of $R_1$ and $R_2$ in $\bP^2$ can not be distinguished topologically by fundamental groups. 
Note that, if $R_1$ and $R_2$ are in a same connected component of $\calF_{b,m}$, then $(R_1, R_2)$ is not a Zariski pair. 
The following problem was raised. 
\begin{Problem}\rm
For two plane curves $R_1$ and $R_2$ of type $(b,m)$ in distinct connected components of $\calF_{b,m}$, is the pair $(R_1,R_2)$ a Zariski pair? 
\end{Problem}

In the present paper, we introduce a \textit{splitting number} of a subvariety in a smooth variety for a Galois cover (Definition~\ref{def. splitting number}), 
and prove that it is invariant under certain homeomorphisms (Proposition~\ref{lem. splitting number}). 
The main theorem is the following theorem. 

\begin{Th}\label{th. main} 
Let $b\geq 4$ be an integer, and let $m$ be a divisor of $b$, 
and let $R_1$ and $R_2$ be plane curves of type $(b,m)$. 
Then $(R_1,R_2)$ is a Zariski pair if and only if 
$R_1$ and $R_2$ are in distinct connected components of $\calF_{b,m}$. 
\end{Th}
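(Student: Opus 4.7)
The ``only if'' direction is a standard isotopy argument: members in the same connected component of $\calF_{b,m}$ form an equisingular family in $\calF_{b,m}\times\bP^2$, and Thom's first isotopy lemma yields an ambient pair homeomorphism $(\bP^2,R_1)\to(\bP^2,R_2)$.

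For the ``if'' direction, I would extract a topological invariant via splitting numbers for a canonically associated Galois cover. Write $R=B+E$ with $\deg B=b$ and $\deg E=3$, and let $P_1,\dots,P_{3n}$ be the intersection points, each of multiplicity $m$. Since $m\mid b$ and $B$ is smooth, $\pi_1(\bP^2\setminus B)\cong\bZ/b$ has a unique index-$m$ subgroup, producing a canonically determined cyclic $m$-fold cover $\phi_R:Y_R\to\bP^2$ branched along $B$. The plan is then to compute the splitting number $s_{\phi_R}(E)$, i.e.\ the number of irreducible components of $\phi_R^{-1}(E)$. A local analysis at each $P_j$, in coordinates where $E=\{s=0\}$ and $B=\{s=x^m\}$ so that $f=s-x^m$ locally, shows that $\phi_R^{-1}(E)\to E$ is \emph{\'etale} cyclic of degree $m$: the fibre over $P_j$, cut out by $z^m=-x^m$, splits into $m$ smooth sheets.

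Writing the restricted cover as $\Spec_E\bigl(\bigoplus_{j=0}^{m-1}\calO_E(-jnH|_E)\bigr)$ with multiplication given by $f|_E$, and twisting by $\calO_E(-\sum P_j)$, one identifies its classifying $m$-torsion bundle as $\tau_R := [\sum P_j - nH|_E]\in\Pic^0(E)[m]$. Letting $d_R:=\mathrm{ord}(\tau_R)$, the standard formula for the number of connected components of an \'etale cyclic cover gives
\[
s_{\phi_R}(E)=m/d_R.
\]
Combined with Shimada's proof of Theorem \ref{th. shimada}(1), which identifies the connected components of $\calF_{b,m}$ bijectively with the divisors of $m$ via the order of $\tau_R$ (the relative $m$-torsion group scheme over $\calF_{b,m}$ is \'etale, so $\mathrm{ord}(\tau_R)$ is locally constant), distinct components yield distinct values of $s_{\phi_R}(E)$.

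To conclude, any hypothetical homeomorphism $h:(\bP^2,R_1)\to(\bP^2,R_2)$ must carry $B_1$ to $B_2$ and $E_1$ to $E_2$, since for $b\ge 4$ the genus $\binom{b-1}{2}\ge 3$ of $B_i$ differs from the genus $1$ of $E_i$. Hence $h$ identifies the canonical covers $\phi_{R_i}$ topologically, and Proposition \ref{lem. splitting number} forces $s_{\phi_{R_1}}(E_1)=s_{\phi_{R_2}}(E_2)$, so $d_{R_1}=d_{R_2}$, contradicting the hypothesis that $R_1$ and $R_2$ lie in distinct components. The main obstacle is the splitting-number computation in paragraph 3: one needs the local multiplicity-$m$ calculation to establish \'etaleness and a careful global twist to match the classifying bundle with $\tau_R$, together with extracting from Shimada that his enumeration of components of $\calF_{b,m}$ is indexed precisely by $\mathrm{ord}(\tau_R)$.
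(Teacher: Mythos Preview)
Your proposal is correct and follows essentially the same route as the paper: the paper also takes the canonical degree-$m$ simple cyclic cover $\phi_i$ branched along $B_i$, invokes Proposition~\ref{prop. splitting number for simple} (which is exactly your torsion-bundle computation, packaged as a general statement) to get $s_{\phi_i}(E_i)=m/\mu_i$ where $\mu_i=\mathrm{ord}(\tau_{R_i})$ is Shimada's index for the component $\calF_{b,m}(\mu_i)$, and then applies Corollary~\ref{cor. number} after observing that any homeomorphism must carry $B_1$ to $B_2$ and $E_1$ to $E_2$. The only cosmetic differences are that the paper separates $B_i$ from $E_i$ by degree rather than genus (equivalent here since both are smooth) and dispatches the easy direction in one line rather than citing Thom's isotopy lemma.
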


By Theorem~\ref{th. shimada} and \ref{th. main}, we obtain the following corollary. 

\begin{Cor}\label{cor. k-plet}
For any integer $k\geq 2$, there exists a $\pi_1$-equivalent Zariski $k$-plet. 
\end{Cor}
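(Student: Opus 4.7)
The plan is to assemble the corollary as a direct consequence of Theorems~\ref{th. shimada} and~\ref{th. main}. For a given integer $k \geq 2$, the strategy is to fix a single family $\calF_{b,m}$ with at least $k$ connected components and extract one curve from each component; the resulting $k$-tuple will automatically be a Zariski $k$-plet by Theorem~\ref{th. main}, and $\pi_1$-equivalent by Theorem~\ref{th. shimada}(2).

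First, I would choose integers $b,m$ with $b \geq 4$, $m \mid b$, and such that the number of divisors of $m$ is at least $k$. This is easy to arrange: take $m$ to be an integer with sufficiently many divisors (for instance, a product of small primes, or a prime power $2^{k-1}$) and let $b$ be any multiple of $m$ with $b \geq 4$. Explicit choices include $(b,m) = (4,2)$ for $k=2$, $(b,m) = (4,4)$ for $k=3$, and more generally $b = m = 2^{k-1}$ for $k\geq 3$. By Theorem~\ref{th. shimada}(1), this ensures that $\calF_{b,m}$ has at least $k$ connected components.

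Next, I would pick representatives $R_1, \ldots, R_k$, one in each of $k$ pairwise distinct connected components of $\calF_{b,m}$. Since every $R_i$ is of type $(b,m)$, they all share the same combinatorial data. Applying Theorem~\ref{th. main} to each unordered pair $\{R_i, R_j\}$ with $i \ne j$ yields that there is no homeomorphism $h_{ij}:\bP^2\to\bP^2$ with $h_{ij}(R_j)=R_i$, so $(R_1,\ldots,R_k)$ is a Zariski $k$-plet. Finally, Theorem~\ref{th. shimada}(2) gives $\pi_1(\bP^2\setminus R_i) \cong \bZ$ for all $i$ if $3 \nmid b$, and $\pi_1(\bP^2\setminus R_i) \cong \bZ \oplus \bZ/3\bZ$ for all $i$ if $3 \mid b$; in either case the fundamental groups of the complements are isomorphic, so the $k$-plet is $\pi_1$-equivalent.

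I do not anticipate any genuine obstacle in this argument, which is a direct assembly of the two cited theorems; all of the substance sits in Theorem~\ref{th. main}, whose proof uses the splitting-number invariant developed earlier in the paper. The only routine bookkeeping is the verification that admissible $(b,m)$ with divisor-count of $m$ at least $k$ and $b \geq 4$ always exist, which is immediate from the elementary fact that the divisor function is unbounded.
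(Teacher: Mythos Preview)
Your proposal is correct and follows essentially the same approach as the paper: assemble the corollary directly from Theorems~\ref{th. shimada} and~\ref{th. main} by choosing $(b,m)$ so that $m$ has at least $k$ divisors and picking one curve from each of $k$ distinct connected components of $\calF_{b,m}$. The paper makes the uniform explicit choice $b=m=5^{k-1}$ (which has exactly $k$ divisors and satisfies $b\geq 4$ for all $k\geq 2$), whereas you use $b=m=2^{k-1}$ with a separate case for $k=2$; both choices work equally well.
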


The present paper is organized as follows. 
In the first section, we define splitting numbers of subvarieties in a smooth variety for a Galois cover, and 
prove that splitting numbers are invariant under certain homeomorphisms. 
In the second section, we investigate splitting numbers for simple cyclic covers, and give a method of splitting numbers of smooth plane curves for simple cyclic covers. 
In the third section, we recall the connected components of $\calF_{b,m}$ given in \cite{shimada}. 
In the final section, we prove Theorem~\ref{th. main} and Corollary~\ref{cor. k-plet} by using splitting numbers.

\section{Splitting numbers of subvarieties for Galois covers}

Let $Y$ be a smooth variety. 
Let $B\subset Y$ be a reduced divisor, and let $G$ be a finite group. 
A surjective homomorphism $\theta: \pi_1(Y\setminus B)\twoheadrightarrow G$ induces an \'etale $G$-cover $\phi':X'\to Y\setminus B$, 
where a \textit{$G$-cover} is a Galois cover $\phi:X\to Y$ with $\Gal(\bC(X)/\bC(Y))\cong G$. 
Hence we obtain an extension of rational function fields $\bC(X')/\bC(Y)$. 
By $\bC(X')$-normalization of $Y$, $\pi'$ is extended to a branched $G$-cover $\pi:X\to Y$ uniquely. 
We call $\phi:X\to Y$ the induced $G$-cover by the surjection $\theta:\pi_1(Y\setminus B)\twoheadrightarrow G$. 

\begin{Def}\label{def. splitting number}\rm
Let $Y$ be a smooth variety, and let $\phi:X\to Y$ be an induced $G$-cover branched at a reduced divisor $B\subset Y$ for a finite group $G$. 
For an irreducible subvariety $C\subset Y$ with $C\not\subset B$, we call the number of irreducible components of $\phi^{\ast}C$ the \textit{splitting number of $C$ for $\phi$}, and denote it by $s_{\phi}(C)$. 
If $s_{\phi}(C)\geq 2$, we call $C$ a \textit{splitting subvariety} (a \textit{splitting curve} if $\dim_{\bC} C=1$) for $\phi$. 
\end{Def}

\begin{Rem}\rm
Let $C_1, C_2, B\subset\bP^2$ be three plane curves of degree $d_1,d_2, 2n$, respectively, 
and let $\phi:S'\to\bP^2$ be the double cover branched along $B$. 
Assume that $C_i\not\subset B$ and $s_{\phi}(C_i)=2$ for $i=1,2$, say $\phi^{\ast}C_i=C_i^{+}+C_i^-$. 
In \cite{bannai}, Bannai defined \textit{splitting type} for the triple $(C_1, C_2; B)$ as the pair $(m_1,m_2)$ of the intersection numbers $C_1^+.C_2^+=m_1$ and $C_1^+.C_2^-=m_2$
for suitable choice of labels. 
Moreover, he proved that splitting types are invariant under certain homeomorphisms \cite[Proposition~2.5]{bannai}. 
\end{Rem}

We prove that splitting numbers are invariant under certain homeomorphisms by the same idea of the proof of \cite[Proposition~2.5]{bannai}. 

\begin{Prop}\label{lem. splitting number}
Let $Y$ be a smooth variety. 
Let $B_1, B_2\subset Y$ be two reduced divisors, and let $G$ be a finite group. 
For $i=1,2$, let $\phi_i:X_i\to Y$ be the induced $G$-cover by a surjection $\theta_i:\pi_1(Y\setminus B)\twoheadrightarrow G$. 
Assume that there exists a homeomorphism $h:Y\to Y$ with $h(B_1)=B_2$ and an automorphism $\sigma:G\to G$ and $\sigma\circ\theta_2\circ h_{\ast}=\theta_1$. 
Then the followings hold; 
\begin{enumerate}
\item 
there exists a unique $G$-equivariant homeomorphism $\tilde{h}:X_1'\to X_2'$ with $\phi_2\circ\tilde{h}=h\circ\phi_1$ up to $G$-action, 
where $X_i'=X_i\setminus\phi_i^{-1}(B_i)$ for $i=1,2$, and 
\item 
for two irreducible subvarieties $C_1, C_2\subset Y$ with $C_2=h(C_1)$ and $C_1\not\subset B_1$, 
$\tilde{h}$ induces a one to one correspondence between the set of irreducible components of $\phi_1^{\ast}C_1$ to the set of those of $\phi_2^{\ast}C_2$. 
Moreover, this correspondence is given by 
\[ \widetilde{C}_1\mapsto \overline{\tilde{h}\left(\widetilde{C}_1\setminus \phi_1^{-1}(B_1)\right)} \]
 for an irreducible component $\widetilde{C}_1$ of $\phi_1^{\ast}C_1$, 
 where $\overline{S}$ is the closure of $S$ for a subset $S\subset X_2$. 
 In particular, $s_{\phi_1}(C_1)=s_{\phi_2}(C_2)$. 
\end{enumerate}
\end{Prop}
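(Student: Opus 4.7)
The plan is to separate the question into a covering-space lift on the unramified locus and a Zariski-closure argument for irreducible components, following the approach of Bannai cited above.

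For part (1), set $Y_i:=Y\setminus B_i$, so that $h$ restricts to a homeomorphism $h_0:Y_1\to Y_2$, and the induced \'etale $G$-cover $\phi_i':X_i'\to Y_i$ is classified by the normal subgroup $\ker\theta_i\triangleleft\pi_1(Y_i)$. Since $h_{0*}$ is an isomorphism of fundamental groups and the hypothesis $\sigma\circ\theta_2\circ h_*=\theta_1$ forces $h_{0*}(\ker\theta_1)=\ker\theta_2$, the classical lifting criterion produces a homeomorphism $\tilde h:X_1'\to X_2'$ covering $h_0$; any two such lifts differ by a deck transformation, so $\tilde h$ is unique up to the $G$-action on $X_2'$, and equivariance with respect to the $\sigma$-twisted action is a direct check on monodromy.

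For part (2), put $C_i^\circ:=C_i\setminus B_i$. Because $C_i$ is irreducible and $C_i\not\subset B_i$, $C_i^\circ$ is dense open in $C_i$, so $\phi_i^{-1}(C_i^\circ)$ is dense open in $\phi_i^{\ast}C_i$, and Zariski closure in $X_i$ yields a bijection between the irreducible components of the two. The relations $\phi_2\circ\tilde h=h\circ\phi_1$ and $h(C_1)=C_2$ guarantee that $\tilde h$ restricts to a homeomorphism $\phi_1^{-1}(C_1^\circ)\to\phi_2^{-1}(C_2^\circ)$. Moreover, the irreducible components of the \'etale preimage $\phi_i^{-1}(C_i^\circ)\to C_i^\circ$ correspond monodromy-theoretically to the orbits of $H_i:=(\theta_i\circ\iota_{i\ast})(\pi_1(C_i^\circ))$ on $G$, where $\iota_i:C_i^\circ\hookrightarrow Y_i$ is the inclusion; the hypothesis combined with $h(C_1^\circ)=C_2^\circ$ gives $\sigma(H_2)=H_1$. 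Hence $\tilde h$ induces a bijection of irreducible components of the unramified preimages, and passing to closures in $X_i$ produces the explicit formula $\widetilde{C}_1\mapsto\overline{\tilde h(\widetilde{C}_1\setminus\phi_1^{-1}(B_1))}$. The equality $s_{\phi_1}(C_1)=s_{\phi_2}(C_2)$ follows at once.

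The step I expect to be most delicate is the identification of Zariski-irreducible components of $\phi_i^{-1}(C_i^\circ)$ with $H_i$-orbits on $G$ when $C_i$ has singularities away from $B_i$, since for an \'etale cover of a non-normal base, connected and irreducible components can differ. I would handle this by pulling $\phi_i'$ back along the normalisation of $C_i$, where smoothness of the normalised base forces the two notions to coincide, and then transporting the resulting bijection down via the finite birational normalisation map. Once this identification is in hand, the topological bijection induced by $\tilde h$ matches the algebraic bijection of components, and the explicit closure formula is automatic.
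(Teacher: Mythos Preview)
Your overall strategy matches the paper's: lift $h$ to the \'etale locus by covering-space theory for (1), then for (2) show that $\tilde h$ gives a bijection on irreducible components of the open preimages and pass to closures. The difference lies in how singularities of $C_i$ are handled.

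You remove only $B_i$ and then, as you yourself flag, must confront the fact that over a singular base the monodromy-orbit description classifies \emph{connected} components of the \'etale preimage rather than irreducible ones (a connected \'etale cover of a nodal curve can have two irreducible components). Your proposed fix via normalisation works in principle, but it requires checking that $h|_{C_1}$ lifts to a homeomorphism of the normalisations (true, since local analytic branches of a curve are topologically detectable) and then transporting the component bijection back down through the finite birational map --- extra steps.

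The paper sidesteps this by removing $B_i\cup\Sing(C_i)$ from the outset. Then $C_i':=C_i\setminus(B_i\cup\Sing(C_i))$ is smooth and connected, so for each irreducible component $\widetilde C_{ij}$ of $\phi_i^\ast C_i$ the set $\widetilde C_{ij}\setminus\phi_i^{-1}(B_i\cup\Sing(C_i))$ is smooth, hence is precisely one connected component of $\phi_i^{-1}(C_i')$. Since a homeomorphism of complex curves carries singular points to singular points, $\tilde h$ restricts to a homeomorphism $\phi_1^{-1}(C_1')\to\phi_2^{-1}(C_2')$, and the bijection of irreducible components follows immediately by matching connected components and taking closures. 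This is more direct than your normalisation detour, though once your detour is fully written out the two arguments are essentially the same.
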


\begin{proof}
The assertion (1) is clear by the uniqueness of the induced $G$-covers by $\theta_i:\pi_1(Y\setminus B_i)\twoheadrightarrow G$. 
We prove the assertion (2). 
Put $C_i':=C_i\setminus(B_i\cup\Sing(C_i))$ and 
let $\phi_i^{\ast}C_i=\sum_{j=1}^{n_i}\widetilde{C}_{i j}$ be the irreducible decomposition for each $i=1,2$, where $n_i=s_{\phi_i}(C_i)$. 
Since $C_i\setminus (B_i\cup\Sing(C_i))$ is connected 
and $\widetilde{C}'_{i j}:=\widetilde{C}_{i j}\setminus\phi_i^{-1}(B_i\cup\Sing(C_i))$ is smooth, 
$\widetilde{C}'_{i j}$ is a connected component of $\phi_i^{-1}(C_i')$. 
Since $\tilde{h}:X_1'\to X_2'$ is a homeomorphism with $\phi_2\circ\tilde{h}=h\circ\phi_1$, 
$\tilde{h}|_{\phi^{-1}(C_1')}:\phi^{-1}(C_1')\to \phi_2^{-1}(C_2')$ is a homeomorphism. 
Hence the numbers of irreducible components of $\phi_i^{\ast}C_i$ ($i=1,2$) coincide, $n_1=n_2$. 
Moreover, since the closure of $\widetilde{C}'_{i j}$ is $\widetilde{C}_{i j}$, 
the assertion (2) holds. 
\end{proof}

For a smooth curve $B\subset\bP^2$ of degree $b$ and a divisor $m$ of $b$, 
a surjection $\pi_1(\bP^2\setminus B)\twoheadrightarrow \bZ/m\bZ$ is uniquely determined since $\pi_1(\bP^2\setminus B)\cong\bZ/b\bZ$. 
Hence, by Proposition~\ref{lem. splitting number}, we obtain the following corollary. 

\begin{Cor}\label{cor. number}
Let $B_1, B_2, C_1, C_2\subset\bP^2$ be smooth curves with $b=\deg B_1=\deg B_2$, 
and let $m$ be a divisor of $b$. 
Put $\phi_i:X_i\to\bP^2$ the induced  $\bZ/m\bZ$-cover by $\pi_1(\bP^2\setminus B_i)\twoheadrightarrow\bZ/m\bZ$ for each $i=1,2$. 
If there exists a homeomorphism $h: \bP^2\to\bP^2$ with $h(B_1)=B_2$ and $h(C_1)=C_2$, 
then $s_{\phi_1}(C_1)=s_{\phi_2}(C_2)$. 
\end{Cor}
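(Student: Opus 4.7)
The plan is to apply Proposition~\ref{lem. splitting number} directly to the given data $(B_1,B_2,h)$ together with the induced $\bZ/m\bZ$-covers $\phi_1,\phi_2$. The one hypothesis of that proposition not already in hand is the existence of an automorphism $\sigma:\bZ/m\bZ\to\bZ/m\bZ$ with $\sigma\circ\theta_2\circ h_{\ast}=\theta_1$, so the only real content of the argument is to produce such a $\sigma$.

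First I would observe that $h$ restricts to a homeomorphism $\bP^2\setminus B_1\to\bP^2\setminus B_2$, which induces an isomorphism $h_{\ast}:\pi_1(\bP^2\setminus B_1)\to\pi_1(\bP^2\setminus B_2)$. Since each $B_i$ is a smooth plane curve of degree $b$, a classical computation (e.g.\ via Zariski--van Kampen on a generic pencil) gives $\pi_1(\bP^2\setminus B_i)\cong\bZ/b\bZ$, generated by a meridian of $B_i$. This is the only input beyond the statement and Proposition~\ref{lem. splitting number} that I would need.

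Next, both $\theta_1$ and $\theta_2\circ h_{\ast}$ are surjections $\bZ/b\bZ\twoheadrightarrow\bZ/m\bZ$. Such a surjection is determined by the image of a fixed generator of $\bZ/b\bZ$, and in each case that image must be a generator of $\bZ/m\bZ$. Since any two generators of $\bZ/m\bZ$ are related by an automorphism of $\bZ/m\bZ$, there is a $\sigma\in\Aut(\bZ/m\bZ)$ with $\sigma\circ\theta_2\circ h_{\ast}=\theta_1$. Applying Proposition~\ref{lem. splitting number}(2) to the irreducible curves $C_1$ and $C_2=h(C_1)$ then yields $s_{\phi_1}(C_1)=s_{\phi_2}(C_2)$.

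There is no serious obstacle here: the argument is a clean reduction packaging the smoothness of $B_i$ (which pins down $\pi_1(\bP^2\setminus B_i)$ as cyclic, so the surjections to $\bZ/m\bZ$ differ only by an automorphism of the target) together with the functoriality already proved in Proposition~\ref{lem. splitting number}. The only mild bookkeeping point is that $h_{\ast}$ depends on a choice of basepoint and of path between basepoints; but since the target $\bZ/m\bZ$ is abelian, this ambiguity is absorbed and does not affect the existence of $\sigma$.
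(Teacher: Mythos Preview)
Your proposal is correct and matches the paper's own argument: the paper simply notes that since $\pi_1(\bP^2\setminus B_i)\cong\bZ/b\bZ$, the surjection to $\bZ/m\bZ$ is essentially unique, and then invokes Proposition~\ref{lem. splitting number}. Your version is in fact slightly more careful, spelling out that ``unique'' here means unique up to an automorphism of $\bZ/m\bZ$, which is precisely the hypothesis Proposition~\ref{lem. splitting number} requires.
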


\section{Splitting numbers for simple cyclic covers}\label{sec. simple cyclic cover}

In this section, we investigate a method of computing the splitting number of a smooth plane curve for a simple cyclic cover, where a simple cyclic cover is defined in Definition~\ref{def. simple cyclic cover} below. 
In this section, let $Y$ be a smooth surface. 
For a line bundle $\calL$ on $Y$, 
let $p_{\calL}:T_{\calL}\to Y$ denote the total space associated to $\calL$. 

\begin{Def}\label{def. simple cyclic cover}\rm
Let $B$ be either a reduced curve or the zero divisor on $Y$. 
Assume that there exists a line bundle $\calL$ on $Y$ such that $\calO_Y(B)\cong\calL^{\otimes n}$. 
Let $s\in H^0(Y,\calO_Y(B))$ be a section vanishing exactly along $B$. 
A cyclic cover $\phi: X\to Y$ of degree $n$ is a \textit{simple cyclic cover branched along $B$} if 
$X$ is isomorphic to the subvariety of $T_{\calL}$ defined by $p_{\calL}^{\ast}s-t^n=0$ and 
$\phi$ coincides with the restriction of $p_{\calL}$ to the subviriety, where $t\in H^0(T_{\calL},p_{\calL}^{\ast}\calL)$ is the tautological section. 
\end{Def}

\begin{Def}\rm
Let $\phi:X\to Y$ be a Galois cover branched along $B\subset Y$. 
Let $C\subset Y$ be an irreducible curve of $Y$. 
Let $\widetilde{C}_0$ denote an irreducible component of $\phi^{\ast}C$, and  let $\bar{\eta}_0:\overline{C}_0\to \widetilde{C}_0$ and $\eta:\overline{C}\to C$  be the normalizations. 
We say that $\phi$ is \textit{essentially unramified over $C$} if the induced cover ${\phi}_C:\overline{C}_0\to \overline{C}$ by $\phi\circ\bar{\eta}_0$ is unramified, and \textit{essentially ramified over $C$} otherwise.  
\end{Def}

\begin{Rem}\label{rem. essentially unramified}\rm
Let $G$ be a finite abelian group. 
Let $\phi:X\to Y$ be a $G$-cover over $Y$, and 
let $C$ be an irreducible curve on $Y$. 
The induced cover $\phi_C:\overline{C}_0\to\overline{C}$ is an $G_0$-cover since $G$ is abelian, where $G_0\subset G$ is the stabilizer of $\widetilde{C}_0$. 
If $\phi$ is essentially ramified over $C$, then the quotient $X_1$ of $X$ by the subgroup of $G$, which is generated by all stabilizers of ramification points of $\phi_C:\overline{C}_0\to\overline{C}$, provides an abelian cover $\phi_1:X_1\to Y$ which is essentially unramified over $C$ such that $s_{\phi}(C)=s_{\phi_1}(C)$. 
Hence, to compute the splitting number $s_{\phi}(C)$, we may assume that $\phi$ is essentially unramified over $C$ if $\phi:X\to Y$ is an abelian cover. 
\end{Rem}

Let $\phi:X\to Y$ be a simple cyclic cover of degree $m$ branched along $B\subset Y$, and 
let $C\subset Y$ be an irreducible curve with $C\ne B$. 
We consider the splitting number $s_{\phi}(C)$. 
For an intersection $P\in B\cap C$ and a local branch $\ell$ of $C$ at $P$, 
let $\I_{P,\ell}$ denote the local intersection number of $B$ and $\ell$ at $P$. 
Let $\sigma:\widehat{Y}\to Y$ be a succession of blowing -ups such that $\sigma^{-1}(B+C)$ is simple normal crossing. 
Let $\widehat{C}\subset\widehat{Y}$ denote the strict transformation of $C$ by $\sigma$.  

\begin{Lem}\label{lem. multiplicity}
With the above assumption, let $E_{P,\ell}$ be the irreducible component of $\sigma^{\ast}B$ which intersects with the local branch $\hat{\ell}$ of $\widehat{C}$ corresponding to $\ell$. 
Then the multiplicity of $E_{P,\ell}$ in $\sigma^{\ast}B$ is equal to $I_{P,\ell}$. 
\end{Lem}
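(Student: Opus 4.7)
The plan is to work analytically at the unique point $\widehat{P}\in\widehat{Y}$ where the strict transform $\hat{\ell}$ meets the exceptional component $E_{P,\ell}$. Since $\sigma^{-1}(B+C)$ is simple normal crossing and $\hat{\ell}$ is smooth and transverse to $E_{P,\ell}$ at $\widehat{P}$, I can choose local analytic coordinates $(u,v)$ on $\widehat{Y}$ near $\widehat{P}$ so that $E_{P,\ell}=\{u=0\}$, $\hat{\ell}=\{v=0\}$, and no other irreducible component of $\sigma^{\ast}B$ passes through $\widehat{P}$. In these coordinates a local equation for $\sigma^{\ast}B$ has the form $u^{k}\,h(u,v)$ with $h(0,0)\neq 0$, where $k$ is by definition the multiplicity of $E_{P,\ell}$ in $\sigma^{\ast}B$.

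Next I would compute $\I_{P,\ell}$ analytically. If $f\in\calO_{Y,P}$ is a local equation of $B$ at $P$, then by definition $\I_{P,\ell}=\mathrm{ord}_{t}\,f(\gvf_{\ell}(t))$ for any local parametrization $\gvf_{\ell}$ of the branch $\ell$. The restriction $\sigma|_{\hat{\ell}}\colon\hat{\ell}\to\ell$ is a birational morphism between the smooth germ $\hat{\ell}$ and the analytically irreducible germ $\ell$, hence realizes the analytic normalization of $\ell$. Consequently a local parameter on $\hat{\ell}$ pulls back to a local uniformizer of $\ell$, so the order of $f$ along $\ell$ equals the order of $\sigma^{\ast}f$ along $\hat{\ell}$. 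In the coordinates above, the latter equals $\mathrm{ord}_{u}\bigl(u^{k}h(u,0)\bigr)=k$, and I conclude $\I_{P,\ell}=k$.

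The step I expect to require the most care is justifying the identification between the order of $f$ along $\ell$ and the order of $\sigma^{\ast}f$ along $\hat{\ell}$. This relies on the standard fact from the theory of resolution of plane curve singularities that, once $\sigma^{-1}(B+C)$ is SNC, the strict transform of each analytic branch of $C$ at $P$ is mapped birationally onto that branch, so the two smooth germs have canonically identified discrete valuations. Once this identification is in place, the remainder of the argument reduces to the short local computation carried out in the SNC chart above.
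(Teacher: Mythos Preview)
Your proof is correct and takes a genuinely different route from the paper's. The paper argues globally via the projection formula: after reducing to the case where $C$ has a single branch at $P$, it considers only the blowups $\sigma_P$ over $P$, writes the global intersection number $\sigma_P^{\ast}B.\widehat{C}_P=B.C$ as a sum of local contributions, and isolates the contribution $m_{P,\ell}$ at the unique point over $P$ by subtracting off the unchanged local intersection numbers at the other points $P'\ne P$. Your argument, by contrast, is entirely local and analytic: you identify $\I_{P,\ell}$ with the order of $\sigma^{\ast}f$ along $\hat{\ell}$ via the normalization property of $\sigma|_{\hat{\ell}}$, and then read off that order directly from the SNC chart. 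Your approach has the advantage of not invoking any global intersection theory on $Y$ (so no implicit appeal to properness or projectivity is needed), and it makes transparent exactly why the multiplicity and the local intersection number coincide. The paper's approach, on the other hand, is slightly quicker to write down once one is willing to use the projection formula, and it avoids having to spell out the normalization identification you flagged as the delicate step. Both are standard; yours is arguably the more self-contained of the two.
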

\begin{proof}
Since the multiplicity of $E_{P,\ell}$ in $\sigma^{\ast}B$ is depend only on the singularity of $\ell$, 
we may assume that $C$ is locally irreducible at $P$. 
Let $\sigma_P:\widehat{Y}_P\to Y$ be the succession of blowing-ups over $P$ in $\sigma$, 
and let $\widehat{C}_P$ be the strict transform of $C$ by $\sigma_P$.  
Let $E'_{P,\ell}$ be the irreducible component of $\sigma_P^{\ast}B$ which intersects with $\widehat{C}_P$, and 
let $m_{P,\ell}$ be the multiplicity of $E_{P,\ell}$ in $\sigma^{\ast}B$. 
It is sufficient to prove that $m_{P,\ell}=\I_{P,\ell}$. 
Note that the exceptional set $\sigma_P^{-1}(P)$ intersects with $\widehat{C}_P$ at one point. 
By the projection formula, we obtain 
\begin{align*}
m_{P,\ell} 
&= \sigma_P^{\ast}B.\widehat{C}_P-\sum_{(P',\ell')\ne (P,\ell)}\I_{P',\ell'} \\
&= B.C-\sum_{(P',\ell')\ne (P,\ell)}\I_{P',\ell'} \\
&= \I_{P,\ell}. 
\qedhere
\end{align*}
\end{proof}

We may assume that $E.\widehat{C}\leq1$ for each irreducible component $E$ of $\sigma^{\ast}B$ after more blowing-ups if necessary. 
Let $\hat{\phi}:\widehat{X}\to\widehat{Y}$ be the $\bC(X)$-normalization of $\widehat{Y}$. 
In general, $\hat{\phi}$ is not a simple cyclic cover. 
By Lemma~\ref{lem. multiplicity}, if there is a local branch $\ell$ of $C$ at $P\in B\cap C$ such that $m$ is not a divisor of $\I_{P,\ell}$, 
then $\hat{\phi}$ is branched along $E_{P,\ell}$, 
hence $\phi$ is essentially ramified over $C$. 
By Remark~\ref{rem. essentially unramified}, we may assume 
\[ \I_{P,\ell}\equiv 0\pmod{m} \]
for any local branch $\ell$ of $C$ at $P\in B\cap C$. 
Let $L$ be a divisor on $Y$ whose associated line bundle $\calL$ defines $\phi:X\to Y$ as in Definition~\ref{def. simple cyclic cover}, 
and let $D_{B,C}$ denote the following divisor on $\widehat{C}$; 
\[ D_{B,C}:=\frac{1}{m}(\sigma^{\ast} B){|_{\widehat{C}}}=\left.\left(\frac{1}{m}\sum_{(P,\ell)}\I_{P,\ell}E_{P,\ell}\right)\right|_{\widehat{C}}, \]
where the summand runs over all intersections $P\in B\cap C$ and all local branches $\ell$ of $C$ at $P$. 
Put $D_{B,C}':=(\sigma^{\ast}L){|_{\widehat{C}}}-D_{B,C}$. 
Note that, by regarding $\widehat{C}$ as the smooth model of $C$, $D_{B,C}$ does not depend on choice of $\sigma$ by Lemma~\ref{lem. multiplicity}. 
Moreover, $mD'_{B,C}$ is linearly equivalent to $0$ on $\widehat{C}$.

\begin{Prop}\label{prop. splitting number for simple}
Under the above circumstance, $s_{\phi}(C)=\nu$ if and only if the order of $[\calO_{\widehat{C}}(D'_{B,C})]\in\Pic^0(\widehat{C})$ is equal to $m/\nu$. 
\end{Prop}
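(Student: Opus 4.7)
The plan is to translate the question about $s_{\phi}(C)$ into counting connected components of an étale cyclic cover of $\widehat{C}$, and then to identify the relevant $m$-torsion line bundle on $\widehat{C}$ explicitly as $\calO_{\widehat{C}}(D'_{B,C})$.

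First, I would pull back $\phi$ along $\sigma:\widehat{Y}\to Y$ and take the $\bC(X)$-normalization $\hat{\phi}:\widehat{X}\to\widehat{Y}$. The standing assumption $\I_{P,\ell}\equiv 0\pmod{m}$ together with Remark~\ref{rem. essentially unramified} ensures that the base change of $\hat{\phi}$ to $\widehat{C}$ is an étale $\bZ/m\bZ$-cover $\pi:Z\to\widehat{C}$. Since each connected component of $Z$ dominates a unique irreducible component of $\phi^{\ast}C$, the splitting number $s_{\phi}(C)$ equals the number of connected components of $\pi$.

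Next, I would identify the $m$-torsion line bundle on $\widehat{C}$ classifying $\pi$. Locally on $\widehat{Y}$ the pulled-back cover has the form $u^m=\sigma^{\ast}s$, with $u$ a fiber coordinate on $\sigma^{\ast}\calL$, so its restriction to $\widehat{C}$ is $u^m=\sigma^{\ast}s|_{\widehat{C}}$. By Lemma~\ref{lem. multiplicity} the divisor of $\sigma^{\ast}s|_{\widehat{C}}$ on $\widehat{C}$ equals $mD_{B,C}$. Locally choosing $f$ with $\mathrm{div}(f)=D_{B,C}$ and setting $v=u/f$ converts the defining equation into $v^m=g$ for a local unit $g$; this identifies $\pi$ with the étale Kummer cover determined by the line bundle $\calN:=\sigma^{\ast}\calL|_{\widehat{C}}\otimes\calO_{\widehat{C}}(-D_{B,C})$ equipped with the trivialization $\calN^{\otimes m}\xrightarrow{\sim}\calO_{\widehat{C}}$ induced by $\sigma^{\ast}s|_{\widehat{C}}/f^m$. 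By definition of $D'_{B,C}$, we have $\calN\cong\calO_{\widehat{C}}(D'_{B,C})$.

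To finish, I would invoke the standard classification of étale $\bZ/m\bZ$-torsors on the smooth connected curve $\widehat{C}$ by $m$-torsion line bundles: if $[\calN]\in\Pic^0(\widehat{C})[m]$ has order $d$, the associated torsor decomposes as a disjoint union of $m/d$ copies of the connected cyclic $\bZ/d\bZ$-cover attached to $\calN$. Applied to $\calN=\calO_{\widehat{C}}(D'_{B,C})$, this gives $s_{\phi}(C)=m/\mathrm{ord}([\calO_{\widehat{C}}(D'_{B,C})])$, which is exactly the claimed equivalence. The main obstacle is the middle step: the local substitution $v=u/f$ depends on the chart, so the identification of $\pi$ with the Kummer cover of $\calN$ must be carried out intrinsically, matching the $\calO_{\widehat{C}}$-algebra structure of $(\hat{\phi}_{\ast}\calO_{\widehat{X}})|_{\widehat{C}}$ after normalization with $\bigoplus_{k=0}^{m-1}\calN^{-k}$ via the chosen trivialization; once this gluing is formalized, the remaining step is a routine consequence of Kummer theory on $\widehat{C}$.
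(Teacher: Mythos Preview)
Your argument is correct and follows essentially the same route as the paper: pull back along $\sigma$, normalize, reduce $s_\phi(C)$ to counting connected components of the induced \'etale $\bZ/m\bZ$-cover of $\widehat{C}$, identify the classifying $m$-torsion line bundle as $\calO_{\widehat{C}}(D'_{B,C})$, and finish via Kummer theory. The paper handles your self-identified gluing obstacle by constructing the twisted line bundle $\widehat{\calL}=\calO_{\widehat{Y}}\bigl(\sigma^{\ast}L-\tfrac{1}{m}\sum\I_{P,\ell}E_{P,\ell}\bigr)$ globally on an open neighborhood $\widehat{Y}'\supset\widehat{C}$ (over which the normalized cover is again a simple cyclic cover in $T_{\widehat{\calL}}$) and then restricting to $\widehat{C}$, rather than patching local substitutions $v=u/f$ on $\widehat{C}$.
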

\begin{proof}
We put 
\[ \widehat{\calL}:=\calO_{\widehat{Y}}(\sigma^{\ast}L-\frac{1}{m}\sum\I_{P,\ell}E_{P,\ell}). \]
Let $\hat{\phi}:\widehat{X}\to\widehat{Y}$ be the $\bC(X)$-normalization of $\widehat{Y}$, and put 
\[ \widehat{Y}':=\widehat{Y}\setminus\Supp(\sigma^{\ast}B-\sum\I_{P,\ell}E_{P,\ell}) \ \mbox{ and } \ \widehat{X}':=\hat{\phi}^{-1}(\widehat{Y}'). \]
Note that $\widehat{C}\subset\widehat{Y}'$. 
The restriction of $\hat{\phi}$ to $\widehat{X}'$, $\hat{\phi}':\widehat{X}'\to \widehat{Y}'$, is an \'etale simple cyclic cover of degree $m$ defined in $T_{\widehat{\calL}}$ over $\widehat{Y}'$. 
By Stein factorization of $\sigma\circ\hat{\phi}:\widehat{X}\to Y$, we obtain a birational morphism $\tilde{\sigma}:\widehat{X}\to X$ with $\phi\circ\tilde{\sigma}=\sigma\circ\hat{\phi}$. 
The birational morphism $\tilde{\sigma}$ provides a one to one correspondence between irreducible components of $\hat{\phi}^{\ast}\widehat{C}$ and those of $\phi^{\ast}C$. 
Thus we have $s_{\phi}(C)=s_{\hat{\phi}}(\widehat{C})=s_{\hat{\phi}'}(\widehat{C})$. 
Since the restriction of $T_{\widehat{\calL}}$ over $\widehat{C}$ is $T_{\calO(D'_{B,C})}$, the assertion follows from the next lemma. 
\end{proof}

\begin{Lem}
Let $C$ be a smooth variety, and let $\calL$ be a line bundle on $C$ with $\calL^{\otimes \mu}\cong\calO_C$ and $\calL^{\otimes i}\not\cong\calO_C$ for $1\leq i<\mu$. 
Put $m:=\mu\nu$ for some $\nu\in\bZ_{>0}$. 
Let $\widetilde{C}$ be the closed subset of $T_{\calL}$ defined by $t^m-1=0$, 
where $t\in H^0(T_{\calL},p_{\calL}^{\ast}\calL)$ is the tautological section. 
Then the number of connected components of $\widetilde{C}$ is equal to $\nu$. 
\end{Lem}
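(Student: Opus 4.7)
The plan is to realize $\pi := p_\calL|_{\widetilde{C}} : \widetilde{C} \to C$ as a finite étale cover of degree $m$ and count its connected components by computing $\dim_\bC H^0(\widetilde{C}, \calO_{\widetilde{C}})$, which coincides with the number of components since $\widetilde{C}$ is proper (in the intended application $C$ is the smooth model of a projective plane curve).

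First I would observe that locally trivializing $\calL$ turns $t$ into a fiber coordinate, so $t^m-1 = 0$ cuts out $m$ disjoint sheets; hence $\pi$ is étale of degree $m$, Galois with group $\bZ/m\bZ$ acting by $t \mapsto \zeta t$ for a primitive $m$-th root of unity $\zeta$. Decomposing $\pi_*\calO_{\widetilde{C}}$ into characters of this Galois action, I would identify
\[ \pi_* \calO_{\widetilde{C}} \ \cong\ \bigoplus_{i=0}^{m-1} \calL^{-i} \]
as $\calO_C$-modules: the $i$-th eigenspace, spanned étale-locally by $t^i$, glues to $\calL^{-i}$, and the algebra multiplication closes via the isomorphism $\calL^{-m} \cong \calO_C$ arising from $\calL^m \cong \calO_C$.

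Taking global sections yields
\[ \dim_\bC H^0(\widetilde{C}, \calO_{\widetilde{C}}) \ =\ \sum_{i=0}^{m-1} h^0(C, \calL^{-i}). \]
The crucial step is to establish $h^0(C, \calL^{-i}) = 1$ when $\mu \mid i$ and $h^0(C, \calL^{-i}) = 0$ otherwise. The first case is immediate from $\calL^{-i} \cong \calO_C$. For the vanishing case, a nonzero section of a nontrivial torsion line bundle $\calL^{-i}$ would yield a nonzero effective divisor $D$ with $\mu D$ linearly equivalent to $0$; but any effective principal divisor on a smooth projective variety vanishes (it corresponds to a globally regular function on a projective variety, hence to a constant), forcing $D = 0$, a contradiction. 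Counting indices $i \in \{0, \dots, m-1\}$ divisible by $\mu$ then gives $m/\mu = \nu$.

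The main obstacle is pinning down the $\calO_C$-algebra decomposition of $\pi_*\calO_{\widetilde{C}}$ and observing that the triviality pattern among the summands $\calL^{-i}$ is periodic in $i$ with period exactly $\mu$; this is precisely where the exact-order hypothesis on $\calL$ enters the count. The rest of the argument is routine bookkeeping in global sections.
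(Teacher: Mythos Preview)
Your argument is correct under the properness assumption you flag, and it takes a genuinely different route from the paper. The paper works directly with the defining equation: since $\calL^{\otimes\mu}\cong\calO_C$, the section $t^\mu$ is a global function on $T_{\calL}$, so $t^m-1$ factors globally as $\prod_{j=0}^{\nu-1}(t^\mu-\zeta_\nu^{\,j})$, exhibiting $\nu$ disjoint closed pieces; conversely, no equation $t^i-a=0$ with $1\le i<\mu$ can be written down globally because $\calL^{\otimes i}\not\cong\calO_C$, so none of these pieces splits further. You instead decompose $\pi_*\calO_{\widetilde{C}}\cong\bigoplus_{i=0}^{m-1}\calL^{-i}$ and read off the number of components as $\dim_\bC H^0$. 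The paper's factorisation argument is terser and, notably, does not require $C$ to be proper, so it proves the lemma in the generality stated. Your global-section count genuinely needs properness---on an affine $C$ a nontrivial torsion line bundle can have nonzero sections, and $\dim_\bC H^0(\widetilde{C},\calO_{\widetilde{C}})$ can overshoot the number of components---so you establish a slightly weaker statement; but as you observe, this already covers the only case used in the paper, where $C=\widehat{C}$ is the smooth projective model of a plane curve.
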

\begin{proof}
Since $\calL^{\otimes \mu}\cong\calO_C$, $t^\mu-\zeta_{\nu}^j=0$ defines closed subset of $T_{\calL}$ for $1\leq j< \nu$, where $\zeta_\nu$ is a primitive $\nu$-th root of unity. 
Since $\calL^{\otimes i}\not\cong\calO_C$ for $1\leq i<\mu$, $t^i-a=0$ does not define globally a closed subset of $T_\calL$. 
Thus the number of connected components of $\widetilde{C}$ is equal to $\nu$. 
\end{proof}

In general, it seems difficult to compute the order of $[\calO_{\widehat{C}}(D'_{B,C})]\in\Pic^0(\widehat{C})$. 
However, the following theorem provides a method of computing splitting numbers of smooth plane curves for simple cyclic covers. 

\begin{Th}
Let $\phi:X\to\bP^2$ be a simple cyclic cover of degree $m$ branched along a plane curve $B$ of degree $b=mn$, and 
let $C\subset\bP^2$ be a smooth curve of degree $d$. 
Assume that $\I_P\equiv 0\pmod{m}$ for each $P\in B\cap C$, 
where $\I_P$ is the local intersection multiplicity of $B$ and $C$ at $P$. 
Let $\nu$ be a divisor of $m$, say $m=\mu\nu$. 
Then 
$s_{\phi}(C)=\nu$ if and only if the following conditions hold; 
\begin{enumerate}
\item 
for $1\leq k<\mu$, there are no curves $D_{kn}\subset\bP^2$ of degree $kn$ such that $D_{kn}|_C= k D_{B,C}$, 
where $D_{B,C}$ is regarded as a divisor on $C$; 
\item
there exists a curve $D_{\mu n}\subset\bP^2$ of degree $\mu n$ such that $D_{\mu n}|_C=\mu D_{B,C}$. 
\end{enumerate}
\end{Th}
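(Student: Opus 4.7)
The plan is to deduce the theorem directly from Proposition~\ref{prop. splitting number for simple} by translating the condition on the order of $[\calO_{\widehat{C}}(D'_{B,C})]$ in $\Pic^0(\widehat{C})$ into the existence/non-existence of cutting plane curves. Since $C$ is smooth, its normalization $\widehat{C}$ coincides with $C$, so we work with divisors on $C$ itself throughout.

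First I would note the degree bookkeeping: since $\calO_{\bP^2}(B)\cong\calL^{\otimes m}$ with $\deg B = mn$, the divisor $L$ satisfies $\deg L = n$ and is linearly equivalent to $nH$ in $\Pic(\bP^2)$, where $H$ is a line. Consequently $k D'_{B,C} = k L|_C - k D_{B,C}$ is linearly equivalent to $knH|_C - kD_{B,C}$, and in particular $k D'_{B,C} \sim 0$ on $C$ if and only if $k D_{B,C} \sim knH|_C$. By Proposition~\ref{prop. splitting number for simple}, $s_{\phi}(C)=\nu$ is equivalent to the conjunction of (a) $\mu D'_{B,C}\sim 0$ on $C$, and (b) $k D'_{B,C}\not\sim 0$ on $C$ for all $1\leq k<\mu$.

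Next I would convert linear equivalence of effective divisors on $C$ to the cutting-out condition by plane curves. The key input is the short exact sequence
\[
0\longrightarrow \calO_{\bP^2}(kn-d)\longrightarrow \calO_{\bP^2}(kn)\longrightarrow \calO_C(knH)\longrightarrow 0,
\]
together with $H^1(\bP^2,\calO_{\bP^2}(kn-d))=0$, which gives surjectivity of the restriction map $H^0(\bP^2,\calO(kn))\to H^0(C,\calO_C(knH))$. Since $kD_{B,C}$ is effective of degree $knd$, it is linearly equivalent to $knH|_C$ if and only if it lies in the complete linear system $|knH|_C|$, and by the surjectivity above this happens if and only if $kD_{B,C}$ is cut out on $C$ by a plane curve $D_{kn}\subset\bP^2$ of degree $kn$ (not containing $C$). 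Combining this with the previous step, condition (a) translates to item~(2) in the theorem and condition (b) translates to item~(1).

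I do not expect any real obstacle: once Proposition~\ref{prop. splitting number for simple} is in hand, the argument is essentially the standard bridge between $\Pic^0(C)$ torsion and the failure of an effective divisor to be cut out by plane curves of prescribed degrees, via the vanishing $H^1(\bP^2,\calO(j))=0$. The only mild point to be careful about is that we want effectivity of the cutting plane curves, which is automatic because $kD_{B,C}$ is itself effective and hence any element of $H^0(\bP^2,\calO(kn))$ restricting to its defining section already vanishes along $kD_{B,C}$ on $C$.
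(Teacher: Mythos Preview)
Your proposal is correct and follows essentially the same approach as the paper: reduce via Proposition~\ref{prop. splitting number for simple} to the order of $[\calO_C(D'_{B,C})]$ in $\Pic^0(C)$, then use surjectivity of the restriction $H^0(\bP^2,\calO(kn))\to H^0(C,\calO_C(kn))$ (coming from the ideal-sheaf exact sequence) to translate linear equivalence $kD_{B,C}\sim knH|_C$ into the existence of a degree-$kn$ plane curve cutting out $kD_{B,C}$. If anything, your write-up is more explicit than the paper's, spelling out the $H^1$-vanishing that justifies surjectivity and the degree bookkeeping for $L\sim nH$.
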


\begin{proof}
Let $f=0$ be a defining equation of $C\subset\bP^2$. 
We have the following exact sequence; 
\[ 0\to H^0(\bP^2,\calO_{\bP^2}(kn-d))\overset{\times f}{\to} H^0(\bP^2,\calO_{\bP^2}(kn))\overset{\alpha}{\to} H^0(C,\calO_C(kn))\to 0, \]
where $\alpha$ is the restriction to $C$. 
By Proposition~\ref{prop. splitting number for simple}, $s_\phi(C)=\nu$ if and only if the order of $[\calO_{C}(D'_{B,C})]\in\Pic^0(C)$ is $\mu$. 
The surjection $\alpha$ implies that the order of $[\calO_{C}(D'_{B,C})]$ is $\mu$ if and only if the conditions (1) and (2) hold. 
\end{proof}

\section{Plane curves of type $(b,m)$}

In this section, we recall the equisingular families of plane curves given in \cite{shimada}. 
Let $b$ be a positive integer with $b\geq3$, and let $m$ be a divisor of $b$. 
We put $n:=b/m$. 
Let $\calF_{b,m}\subset \bP_{\ast} H^0(\bP^2,\calO(b+3))$ be the family of all curves of type $(b,m)$. 
Note that any two curves $R$ and $R'$ of type $(b,m)$ have same combinatorics. 
Let $R=B+E$ be a curve of type $(b,m)$. 
Let $D_R$ denote the reduced divisor $(B|_E)_{\mathrm{red}}$ of degree $3n$ on $E$, 
and let $H$ denote a divisor of degree $3$ on $E$ that is obtained as the intersection of $E$ and a line on $\bP^2$. 
Then $\calO_E(D_R-n H)$ is an invertible sheaf of degree $0$ on $E$; 
\[ [\calO_E(D_R-nH)]\in\Pic^0(E). \]
Note that $D_R$ and $D_R-nH$ correspond to $D_{B,E}$ and $D_{B,E}'$ in Section~\ref{sec. simple cyclic cover}, respectively. 
Let $\lambda(R)$ be the order of the isomorphism class $[\calO_E(D_R-nH)]$ in $\Pic^0(E)$, 
which is a divisor of $m$. 
For a divisor $\mu$ of $m$, we write by $\calF_{b,m}(\mu)$ the union of all connected components of $\calF_{b,m}$ on which the function $\lambda$ is constantly equal to $\mu$. 
\[ \calF_{b,m}=\coprod_{\mu | m}\calF_{b,m}(\mu) \]
Shimada \cite{shimada} proved the following proposition. 

\begin{Prop}[{\cite[Proposition~2.1]{shimada}}]\label{prop. shimada}
Suppose that $b\geq 3$, and let $m$ be a divisor of $b$. 
For any divisor $\mu$ of $m$, the variety $\calF_{b,m}(\mu)$ is irreducible and of dimension $(b-1)(b-2)/2+3n+8$. 
\end{Prop}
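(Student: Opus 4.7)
The plan is to project $\calF_{b,m}(\mu)$ onto the moduli of smooth plane cubics and factor through a natural incidence variety. Let $\calU \subset \bP_{\ast}H^0(\bP^2,\calO(3))$ be the open locus of smooth cubics, which is irreducible of dimension $9$, and define
\[
\calG := \{\,(E,D) \mid E \in \calU,\ D \subset E \text{ reduced effective of degree } 3n,\ [\calO_E(D-nH)] \text{ of exact order } \mu\,\}.
\]
The plan is to show that the map $\calF_{b,m}(\mu) \to \calG$ sending $R = B+E$ to $(E,(B|_E)_{\mathrm{red}})$ is a projective bundle with fibers of projective dimension $(b-1)(b-2)/2$, while $\calG$ itself is irreducible of dimension $3n + 8$.

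For the fiber dimensions, consider first the projection $\calG \to \calU$. Over a fixed $E$, the fiber decomposes as a disjoint union of linear systems $|nH + \alpha|$, one for each $\alpha \in \Pic^0(E)[m]$ of exact order $\mu$. Riemann--Roch on the elliptic curve $E$ gives $h^0(\calO_E(nH+\alpha)) = 3n$, so each such linear system is a $\bP^{3n-1}$, and removing the codimension-one non-reduced locus preserves the dimension $3n-1$. Over a fixed $(E,D) \in \calG$, the condition $m(D-nH) \sim 0$ on $E$ gives $\calO_E(mD) \cong \calO_{\bP^2}(b)|_E$, hence a one-dimensional space of sections of $\calO_{\bP^2}(b)|_E$ whose divisor is exactly $mD$. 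The restriction sequence
\[
0 \to H^0(\bP^2,\calO(b-3)) \to H^0(\bP^2,\calO(b)) \to H^0(E,\calO_{\bP^2}(b)|_E) \to 0,
\]
surjective since $H^1(\bP^2,\calO(b-3))=0$, then shows that the projective space of plane curves $B$ of degree $b$ with $B|_E = mD$ has dimension $(b-1)(b-2)/2$. Summing the three contributions yields
\[
\dim \calF_{b,m}(\mu) = 9 + (3n-1) + (b-1)(b-2)/2 = (b-1)(b-2)/2 + 3n + 8.
\]

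The hard part is proving \emph{irreducibility} of $\calG$, since each fiber of $\calG \to \calU$ a priori breaks into a disjoint union of copies of $\bP^{3n-1}$ indexed by the classes of exact order $\mu$ in $\Pic^0(E)[m] \cong (\bZ/m\bZ)^2$. This reduces to showing that the monodromy action of $\pi_1(\calU)$ on $\Pic^0(E)[m]$ is transitive on elements of exact order $\mu$. My plan is to combine two classical ingredients: first, every smooth elliptic curve occurs as a smooth plane cubic by means of some degree-three line bundle, so the monodromy of the family $\{E\}_{E\in\calU}$ dominates the monodromy of the universal elliptic curve over the modular curve; second, the latter monodromy on the $m$-torsion is the full symplectic group $SL_2(\bZ/m\bZ)$, which acts transitively on elements of each prescribed exact order in $(\bZ/m\bZ)^2$ (via the standard argument that any primitive vector in $(\bZ/\mu\bZ)^2$ extends to an $SL_2$-matrix, combined with surjectivity of the reduction $SL_2(\bZ/m\bZ) \to SL_2(\bZ/\mu\bZ)$). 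Irreducibility of $\calF_{b,m}(\mu)$ then follows since it is a projective bundle over the irreducible base $\calG$.
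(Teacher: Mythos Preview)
This proposition is not proved in the paper; it is quoted from Shimada's article \cite{shimada} without argument, so there is no in-paper proof to compare against. Your outline is the natural approach and the dimension count $9+(3n-1)+(b-1)(b-2)/2$ is correct. Two points, however, are not yet justified.

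First, the map $\calF_{b,m}(\mu)\to\mathcal{G}$ is not a projective bundle: the fibre over $(E,D)$ is only the \emph{open} locus of \emph{smooth} curves inside the $\bP^{(b-1)(b-2)/2}$ of all degree-$b$ curves $B$ with $B|_E=mD$, and you have not shown this open set is non-empty. Bertini gives smoothness of a general member away from the base locus, but every member of the system is forced to meet $E$ to order $m$ at each point of $D$, so smoothness at those base points must be checked separately (for instance, perturb a particular $f_0$ by $g\cdot h$ with $g$ the cubic defining $E$ and $h$ of degree $b-3$ non-vanishing on $D$; since $E$ is smooth this forces the differential of $f_0+gh$ to be non-zero at each base point).

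Second, your monodromy justification is not yet an argument: the mere surjectivity of $\calU\to\calM_{1,1}$ does not imply that $\pi_1(\calU)$ surjects onto $SL_2(\bZ/m\bZ)$. You need either the connectedness of the fibres of this map, or---more directly---the classical computation (via Picard--Lefschetz on a generic pencil of cubics with its twelve nodal fibres, or Beauville's general theorem on the monodromy of universal hypersurfaces) that the monodromy of the family of smooth plane cubics on $H_1$ is the full $SL_2(\bZ)$. Once these two points are supplied, your proof is complete.
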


\begin{Rem}\rm
Proposition~\ref{prop. shimada} implies that the number of connected components of $\calF_{b,m}$ is equal to the number of divisors of $m$. 
\end{Rem}

\section{Proofs}

In this section, we prove Theorem~\ref{th. main} and Corollary~\ref{cor. k-plet}. 

\begin{proof}[Proof of Theorem~\ref{th. main}]
Let $\mu_1$ and $\mu_2$ be distinct divisors of $m$, 
and let $R_i=B_i+E_i$ be a member of $\calF_{b,m}(\mu_i)$ for each $i=1,2$. 
Let $\phi_i:X_i\to\bP^2$ be the simple cyclic cover of degree $m$ branched along $B_i$ for each $i=1,2$. 
If there exists a homeomorphism $h:\bP^2\to\bP^2$ such that $h(R_1)=R_2$, 
then $h(B_1)=B_2$ and $h(E_1)=E_2$ since $\deg B_i=b>3=\deg E_i$. 
By Corollary~\ref{cor. number}, we obtain $s_{\phi_1}(E_1)=s_{\phi_2}(E_2)$. 
On the other hand, by Proposition~\ref{prop. splitting number for simple} and the definition of $\calF_{b,m}(\mu_i)$, we have $s_{\phi_i}(E_i)=m/\mu_i$ for each $i=1,2$. 
Hence $(R_1,R_2)$ is a Zariski pair. 

Conversely, if $\mu_1=\mu_2$, then it is clear that there is a homeomorphism $h:\bP^2\to\bP^2$ such that $h(R_1)=R_2$ since $R_1$ and $R_2$ are members of $\calF_{b,m}(\mu_1)=\calF_{b,m}(\mu_2)$. 
\end{proof}

\begin{proof}[Proof of Corollary~\ref{cor. k-plet}]
By Theorem~\ref{th. shimada}, the number of connected components of $\calF_{5^{k-1},5^{k-1}}$ is equal to $k$. 
For each integer $0\leq i\leq k-1$, let $R_{i}$ be a member of $\calF_{5^{k-1},5^{k-1}}(5^i)$.  
By Theorem~\ref{th. shimada} and \ref{th. main}, $(R_0,\dots,R_{k-1})$ is a $\pi_1$-equivalent Zariski $k$-plet. 
\end{proof}

\end{document}